\DeclareSymbolFont{cyrletters}{OT2}{wncyr}{m}{n}
\DeclareMathSymbol{\Sha}{\mathalpha}{cyrletters}{"58}
\DeclarePairedDelimiter{\ceil}{\lceil}{\rceil}
\let\Re\undefined
\DeclareMathOperator{\Re}{Re}
\DeclareMathOperator{\ord}{ord}
\DeclareMathOperator{\GL}{GL}
\begin{document}

    \theoremstyle{plain}
\newtheorem{thm}{Theorem} \newtheorem{cor}[thm]{Corollary}
\newtheorem{thmy}{Theorem}
\renewcommand{\thethmy}{\Alph{thmy}}
\newenvironment{thmx}{\stepcounter{thm}\begin{thmy}}{\end{thmy}}
    \newtheorem{lemma}[thm]{Lemma}  \newtheorem{prop}[thm]{Proposition}
    \newtheorem{conj}[thm]{Conjecture}  \newtheorem{fact}[thm]{Fact}
    \newtheorem{claim}[thm]{Claim}
    \theoremstyle{definition}
    \newtheorem{defn}[thm]{Definition}
    \newtheorem{example}[thm]{Example}
    \newtheorem{exercise}[thm]{Exercise}
    \theoremstyle{remark}
    \newtheorem*{remark}{Remark}

    \newcommand{\BA}{{\mathbb {A}}} \newcommand{\BB}{{\mathbb {B}}}
    \newcommand{\BC}{{\mathbb {C}}} \newcommand{\BD}{{\mathbb {D}}}
    \newcommand{\BE}{{\mathbb {E}}} \newcommand{\BF}{{\mathbb {F}}}
    \newcommand{\BG}{{\mathbb {G}}} \newcommand{\BH}{{\mathbb {H}}}
    \newcommand{\BI}{{\mathbb {I}}} \newcommand{\BJ}{{\mathbb {J}}}
    \newcommand{\BK}{{\mathbb {K}}} \newcommand{\BL}{{\mathbb {L}}}
    \newcommand{\BM}{{\mathbb {M}}} \newcommand{\BN}{{\mathbb {N}}}
    \newcommand{\BO}{{\mathbb {O}}} \newcommand{\BP}{{\mathbb {P}}}
    \newcommand{\BQ}{{\mathbb {Q}}} \newcommand{\BR}{{\mathbb {R}}}
    \newcommand{\BS}{{\mathbb {S}}} \newcommand{\BT}{{\mathbb {T}}}
    \newcommand{\BU}{{\mathbb {U}}} \newcommand{\BV}{{\mathbb {V}}}
    \newcommand{\BW}{{\mathbb {W}}} \newcommand{\BX}{{\mathbb {X}}}
    \newcommand{\BY}{{\mathbb {Y}}} \newcommand{\BZ}{{\mathbb {Z}}}

    \newcommand{\CA}{{\mathcal {A}}} \newcommand{\CB}{{\mathcal {B}}}
    \newcommand{\CC}{{\mathcal {C}}} \renewcommand{\CD}{{\mathcal {D}}}
    \newcommand{\CE}{{\mathcal {E}}} \newcommand{\CF}{{\mathcal {F}}}
    \newcommand{\CG}{{\mathcal {G}}} \newcommand{\CH}{{\mathcal {H}}}
    \newcommand{\CI}{{\mathcal {I}}} \newcommand{\CJ}{{\mathcal {J}}}
    \newcommand{\CK}{{\mathcal {K}}} \newcommand{\CL}{{\mathcal {L}}}
    \newcommand{\CM}{{\mathcal {M}}} \newcommand{\CN}{{\mathcal {N}}}
    \newcommand{\CO}{{\mathcal {O}}} \newcommand{\CP}{{\mathcal {P}}}
    \newcommand{\CQ}{{\mathcal {Q}}} \newcommand{\CR}{{\mathcal {R}}}
    \newcommand{\CS}{{\mathcal {S}}} \newcommand{\CT}{{\mathcal {T}}}
    \newcommand{\CU}{{\mathcal {U}}} \newcommand{\CV}{{\mathcal {V}}}
    \newcommand{\CW}{{\mathcal {W}}} \newcommand{\CX}{{\mathcal {X}}}
    \newcommand{\CY}{{\mathcal {Y}}} \newcommand{\CZ}{{\mathcal {Z}}}

    \newcommand{\RA}{{\mathrm {A}}} \newcommand{\RB}{{\mathrm {B}}}
    \newcommand{\RC}{{\mathrm {C}}} \newcommand{\RD}{{\mathrm {D}}}
    \newcommand{\RE}{{\mathrm {E}}} \newcommand{\RF}{{\mathrm {F}}}
    \newcommand{\RG}{{\mathrm {G}}} \newcommand{\RH}{{\mathrm {H}}}
    \newcommand{\RI}{{\mathrm {I}}} \newcommand{\RJ}{{\mathrm {J}}}
    \newcommand{\RK}{{\mathrm {K}}} \newcommand{\RL}{{\mathrm {L}}}
    \newcommand{\RM}{{\mathrm {M}}} \newcommand{\RN}{{\mathrm {N}}}
    \newcommand{\RO}{{\mathrm {O}}} \newcommand{\RP}{{\mathrm {P}}}
    \newcommand{\RQ}{{\mathrm {Q}}} \newcommand{\RR}{{\mathrm {R}}}
    \newcommand{\RS}{{\mathrm {S}}} \newcommand{\RT}{{\mathrm {T}}}
    \newcommand{\RU}{{\mathrm {U}}} \newcommand{\RV}{{\mathrm {V}}}
    \newcommand{\RW}{{\mathrm {W}}} \newcommand{\RX}{{\mathrm {X}}}
    \newcommand{\RY}{{\mathrm {Y}}} \newcommand{\RZ}{{\mathrm {Z}}}

    \newcommand{\fa}{{\mathfrak{a}}} \newcommand{\fb}{{\mathfrak{b}}}
    \newcommand{\fc}{{\mathfrak{c}}} \newcommand{\fd}{{\mathfrak{d}}}
    \newcommand{\fe}{{\mathfrak{e}}} \newcommand{\ff}{{\mathfrak{f}}}
    \newcommand{\fg}{{\mathfrak{g}}} \newcommand{\fh}{{\mathfrak{h}}}
    \newcommand{\fii}{{\mathfrak{i}}} \newcommand{\fj}{{\mathfrak{j}}}
    \newcommand{\fk}{{\mathfrak{k}}} \newcommand{\fl}{{\mathfrak{l}}}
    \newcommand{\fm}{{\mathfrak{m}}} \newcommand{\fn}{{\mathfrak{n}}}
    \newcommand{\fo}{{\mathfrak{o}}} \newcommand{\fp}{{\mathfrak{p}}}
    \newcommand{\fq}{{\mathfrak{q}}} \newcommand{\fr}{{\mathfrak{r}}}
    \newcommand{\fs}{{\mathfrak{s}}} \newcommand{\ft}{{\mathfrak{t}}}
    \newcommand{\fu}{{\mathfrak{u}}} \newcommand{\fv}{{\mathfrak{v}}}
    \newcommand{\fw}{{\mathfrak{w}}} \newcommand{\fx}{{\mathfrak{x}}}
    \newcommand{\fy}{{\mathfrak{y}}} \newcommand{\fz}{{\mathfrak{z}}}
     \newcommand{\fA}{{\mathfrak{A}}} \newcommand{\fB}{{\mathfrak{B}}}
    \newcommand{\fC}{{\mathfrak{C}}} \newcommand{\fD}{{\mathfrak{D}}}
    \newcommand{\fE}{{\mathfrak{E}}} \newcommand{\fF}{{\mathfrak{F}}}
    \newcommand{\fG}{{\mathfrak{G}}} \newcommand{\fH}{{\mathfrak{H}}}
    \newcommand{\fI}{{\mathfrak{I}}} \newcommand{\fJ}{{\mathfrak{J}}}
    \newcommand{\fK}{{\mathfrak{K}}} \newcommand{\fL}{{\mathfrak{L}}}
    \newcommand{\fM}{{\mathfrak{M}}} \newcommand{\fN}{{\mathfrak{N}}}
    \newcommand{\fO}{{\mathfrak{O}}} \newcommand{\fP}{{\mathfrak{P}}}
    \newcommand{\fQ}{{\mathfrak{Q}}} \newcommand{\fR}{{\mathfrak{R}}}
    \newcommand{\fS}{{\mathfrak{S}}} \newcommand{\fT}{{\mathfrak{T}}}
    \newcommand{\fU}{{\mathfrak{U}}} \newcommand{\fV}{{\mathfrak{V}}}
    \newcommand{\fW}{{\mathfrak{W}}} \newcommand{\fX}{{\mathfrak{X}}}
    \newcommand{\fY}{{\mathfrak{Y}}} \newcommand{\fZ}{{\mathfrak{Z}}}
    \newcommand{\Supp}{\operatorname{Supp}}
    \newcommand{\coker}{\operatorname{coker}}
    \newcommand{\Ad}{\operatorname{Ad}}
    \newcommand{\Sw}{\operatorname{Sw}}
    \newcommand{\Sp}{\operatorname{Sp}}
    \newcommand{\Nrd}{\operatorname{Nrd}}
    \newcommand{\Id}{\operatorname{Id}}
    \newcommand{\A}{\operatorname{A}}
    \newcommand{\Irr}{\operatorname{Irr}}
    \newcommand{\JL}{\operatorname{JL}}
    \newcommand{\codim}{\operatorname{codim}}
     \newcommand{\Ind}{\operatorname{Ind}}

\title[]{A constraint for twist equivalence of cusp forms on GL$(n)$}%
\author{Dinakar Ramakrishnan and Liyang Yang}

\address{253-37 Caltech, Pasadena\\
	CA 91125, USA}
\email{dinakar@caltech.edu}
\address{253-37 Caltech, Pasadena\\
CA 91125, USA}
\email{lyyang@caltech.edu}

\date{\today}%
\maketitle

This Note owes its existence to a question posed to the first author by Kaisa Matom\"aki, spurred in turn by her ongoing joint works with Maksym Radziwill on the sign changes of coefficients $\lambda_n(h)$ of cusp forms $h$ (ref. \cite{MR19}). Her question is this:

{\it Suppose $f, g$ are newforms, holomorphic or of Maass type, on the upper half plane $\mathcal H$ of levels $N, N'$ respectively. Is there an optimal upper bound on the conductor $Q$ of a Dirichlet character $\chi$ such that $f, g$ are twist equivalent via $\chi$, i.e., for all but a finite number of primes $p$, we have $\lambda_p(f)=\lambda_p(g)\chi(p)$?}

The answer is Yes, and we could show that $Q\leq \sqrt{NN'}.$ (The fact that $Q$ is bounded above by some power of $NN'$ is not difficult; see the beginning of Section \ref{sec3}.) Now suppose $f$ or $g$ has trivial character. Then we even show that $Q \leq \sqrt{[N,N']}$, where $[N,N']$ is the least common multiple of $N, N'$.
By the Atkin-Lehner theory, one knows that prime divisors of $Q$ must also divide $[N,N']$. In fact our proof below will show that $Q^2$ divides $[N,N']$, still with $f$ or $g$ of trivial character. Moreover, this is optimal, even when $N,$ $N'$ have common divisors. In the special sub-case where $N, N'$ are square-free, $Q$ must be $1$ and there is no non-trivial twist equivalence between $f$ and $g$.

It is convenient for us to work in the framework of automorphic representations, and we prove an analogous result for cusp forms on GL$(n)$, for any $n \geq 1$, over any global field $F$. Denote by $\mathbb{A}_F$ the ring of adeles of $F$.

Let $\pi_1$ and $\pi_2$ be unitary cuspidal representations on $GL(n,\mathbb{A}_F),$ where $F$ is a global field. Let $N_1$ (resp. $N_2$) be the arithmetic conductor of $\pi_1$ (resp. $\pi_2$), which is an ideal in $\mathcal O_F$. Let $\chi$ be an idele class character of $F$. Denote by $Q$ the conductor of $\chi.$

\medskip

\begin{thmx}\label{A}
Let notation be as above. Suppose $\pi_1\otimes\chi\simeq\pi_2.$ Then we have
\begin{equation}\label{0}
Q^n\mid N_1N_2.
\end{equation}
Suppose further that for every finite place $v,$ when $\pi_{1,v}$ or $\pi_{2,v}$ is ramified either it is in the discrete series or $n=2$ with trivial central character. Then $Q^n\mid [N_1, N_2],$ where $[N_1, N_2]$ is the least common multiple of $N_1$ and $N_2.$
\end{thmx}

\medskip
In fact, the proof will give a corresponding local statement at each place, and the case of ramified principal series implies the optimality of the bound.

\medskip

\begin{remark}
Suppose $n=2$ and $\pi_1$ has trivial central character. Then $Q^2\mid [N_1, N_2].$ (See Proposition \ref{32} of Sec. \ref{sec3}.) If moreover, $N_1,$ $N_2$ are squarefree, then $Q=1,$ and $\chi$ a class group character. Thus if $F$ has class number 1, e.g., $F=\mathbb{Q},$ there is no such $\chi.$
\medskip

Also, for $GL(2),$ it was shown by the first author ( \cite{Ram00}), using multiplicity one for $SL(2),$ that if $\Ad(\pi)\simeq\Ad(\pi'),$ where $\Ad$ is the Gelbart-Jacquet adjoint lifting from GL$(2)$ to GL$(3)$, then $\pi'\simeq\pi\otimes\chi$ for some character $\chi.$ So Theorem \ref{A} elucidates the fibre of $\pi\rightarrow\Ad(\pi).$
\end{remark}

\medskip

Our proof of Theorem A reduces it, by the factorizability of the conductors, to a local statement, and further by the Bernstein-Zelevinsky classification, to a statement about discrete series representations of GL$(n, F_v)$, and then makes use of the local Jacquet-Langlands correspondence to a question about twists of (finite-dimensional) irreducible representations of its inner form, the (multiplicative group of the) division algebra $D$ of dimension $n^2$ over $F_v$ of invariant $1/n$, where we appeal to some known results of Koch and Zink. (One can also work in the framework of Moy and Prasad.)

\medskip

In our result, $\chi$ need not have finite order. Thus it is natural to ask for such a bound for the full analytic conductor, which includes an archimedean analysis. This is done in the last section, see Theorem \ref{B} therein. 
\medskip

D. Prasad has referred us to the preprint (\cite{Cor17}), which also treats character twists, but goes in another direction. 

\medskip
We thank Kaisa Matom\"aki, Dipendra Prasad and Maksym Radziwill for their interest.

\medskip

\section{Representations of Central Division Algebras}

Let $v$ be a non-archimedean place of $F.$ Let $\mathfrak{o}_{F_v}$ be the ring of integers of $F_v.$ Let $\varpi_{v}$ be a fixed uniformizer. Let $\ord_{F_v}$ be the valuation normalized such that $\ord_{F_v}(\varpi)=1.$ Denote by $U_k(F_v)=1+\varpi^k\mathfrak{o}_{F_v}$ if $k\geq 1.$ Set $U_0(F_v)=\mathfrak{o}_{F_v}^{\times}.$

Let $\chi_v$ be a quasi-character of $F_v^{\times}.$ Denote by $\A(\chi_v)$ the conductor exponent, i.e., the least integer $k\geq 0$ such that $\chi_v\mid_{U_k(F_v)}=\Id.$

Let $D_v$ be a division algebra over $F_v$ of dimension $[D_v: F_v]=d_v^2.$ Denote by $\Nrd$ the reduced norm on $D_v.$ Let $\ord_{F_v}$ be the normalized valuation on $F_v.$ Denote by
\begin{align*}
U_k(D_v)=\big\{x\in D^{\times}:\ \ord_{F_v}(\Nrd(x-1))\geq k\big\},\quad\forall\  k\geq 1;
\end{align*}
and set $U_0(D_v)=\ker(\ord_{F_v}\circ\Nrd).$ Let $\pi_v'$ be an irreducible admissible representation of $D_v^{\times}.$ Define the level $l(\pi_v')$ of $\pi_v'$ as following:
\begin{align*}
l(\pi_v')=\min_{k\leq 0}\big\{k:\ \pi'_v\mid{U_{k}(D_v)}=\Id\big\}.
\end{align*}

Let $\chi_v$ be a quasi-character of $F_v^{\times}.$ Set $\pi_v'\otimes\chi_v=\pi_v'\otimes(\chi_v\circ\Nrd).$ Then it is clear from the definition that $l(\pi_v'\otimes\chi_v)= l(\chi_v\circ\Nrd),$ assuming $l(\chi_v\circ\Nrd)\geq l(\pi_v').$

\medskip

\subsection{Local Level-Conductor Formula}
Let $\pi'_v$ be an irreducible admissible representation of $D_v^{\times}.$ One can define (e.g.  \cite{KZ80}) the conductor exponent $\A(\pi_v')$ of $\pi_v'$ via the $\epsilon$-factor. Then one has the following level-conductor formula ( (4.3.4) in Sec. 4.3 of \cite{KZ80}), which is well known for $n=2.$
\begin{lemma}
Let $\pi'_v$ be an irreducible admissible representation of $D_v^{\times}.$ Then
\begin{equation}\label{10}
\A(\pi'_v)=l(\pi_v')+d_v-1.
\end{equation}
\end{lemma}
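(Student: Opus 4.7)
The plan is to reduce the identity to a parallel statement on $GL(d_v, F_v)$ via the local Jacquet-Langlands correspondence, and then to match both sides through the Langlands parameter of $\pi'_v$. Let $\pi_v := \JL(\pi'_v)$ denote the essentially square-integrable representation of $GL(d_v, F_v)$ associated to $\pi'_v$. Since the local Jacquet-Langlands correspondence is characterized (after the standard normalization) by compatibility of $\epsilon$-factors, one has $\A(\pi'_v) = \A(\pi_v)$, and the task reduces to relating $\A(\pi_v)$ to $l(\pi'_v)$.

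By Bernstein-Zelevinsky, $\pi_v \simeq \operatorname{St}_m(\rho)$ for a unique divisor $m \mid d_v$ and a supercuspidal $\rho$ of $GL(d_v/m, F_v)$, with Langlands parameter $\Sp_m \otimes \sigma$ where $\sigma \leftrightarrow \rho$ under the local Langlands correspondence. Applying the Weil-Deligne conductor formula expresses $\A(\pi_v)$ as an explicit function of $m$ and $\A(\rho)$. On the division-algebra side I would invoke the Koch construction of irreducible representations of $D_v^\times$ by induction from open-compact-mod-center subgroups attached to fundamental strata, reading $l(\pi'_v)$ off from the depth of the underlying stratum. Using the identity $\ord_{F_v} \circ \Nrd = (1/d_v)\,\ord_{D_v}$ on $D_v^\times$ to convert the $D_v$-valuation depth into the required $F_v$-valuation level then presents $l(\pi'_v)$ as a second explicit function of $(m, \A(\rho))$. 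Matching the two formulas yields $\A(\pi_v) = l(\pi'_v) + d_v - 1$.

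The shift $d_v - 1$ splits naturally into two contributions: one from $\Sp_m$ in the Weil-Deligne parameter (beyond the wild conductor of $\sigma$), and one from the renormalization of the valuation under $\Nrd$ when passing from $D_v$ to $F_v$. As a sanity check, if $\pi'_v$ is the trivial representation of $D_v^\times$ (so that $l(\pi'_v) = 0$), then $\JL(\pi'_v)$ is the Steinberg representation $\operatorname{St}_{d_v}$, of conductor $d_v - 1$, in agreement with the claim; the same kind of check on a tamely ramified character twist fixes the normalization in general.

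The main obstacle is the precise translation in the third step: matching the Koch stratum datum on the $D_v^\times$ side with the Bernstein-Zelevinsky datum $(m, \rho)$ on the $GL(d_v, F_v)$ side, especially in the wildly ramified case where one must keep careful track of how the wild part of $\sigma$ and the monodromy $\Sp_m$ jointly assemble the conductor while the stratum depth on $D_v$ is refined by the factor $d_v$ coming from $\Nrd$. This is exactly the combinatorial calculation carried out in Section 4.3 of Koch-Zink, which is why the authors quote that reference rather than reproducing the argument here.
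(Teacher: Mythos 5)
Note first that the paper offers no proof of this lemma at all: it is quoted verbatim as formula (4.3.4) of Section 4.3 of Koch--Zink, so the only question is whether your outline could stand as an independent argument. As written it cannot, because its single substantive step is precisely the one you leave unexecuted and then attribute to the wrong source. Reducing to $\GL(d_v,F_v)$ via $\A(\pi_v')=\A(\pi_v)$ and computing $\A(\mathrm{St}_m(\rho))$ from the Weil--Deligne parameter $\Sp_m\otimes\sigma$ is routine (though note the $\epsilon$-factor compatibility is a \emph{property} of the Jacquet--Langlands correspondence, not its characterization, which is by character identities); the real content is your third step, an explicit formula for the level $l(\pi_v')$ of the transfer in terms of the data $(m,\A(\rho))$. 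That is tantamount to explicit control of levels (equivalently, depth) under the Jacquet--Langlands correspondence, including the wildly ramified case, which is substantially deeper than the lemma itself and is \emph{not} the calculation carried out in Section 4.3 of Koch--Zink: that reference treats the tame case of the correspondence, and its (4.3.4) is the level--conductor formula itself, proved directly on the division algebra side with no transfer to $\GL(d_v)$ and no Bernstein--Zelevinsky bookkeeping. So your plan defers its core step to a reference that does not perform it, and your sanity checks (trivial representation versus Steinberg, tame twists) cannot pin down the wild case.

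For the record, the honest short proof is the direct one, and it explains the shift differently from your split into an $\Sp_m$ contribution plus a valuation renormalization. One computes the Godement--Jacquet $\epsilon$-factor of $\pi_v'$ on $D_v^\times$ itself: it is given by a Gauss-sum type integral against an additive character of $D_v$ composed with the reduced trace, and the standard stationary/support analysis shows that the conductor exponent equals the level $l(\pi_v')$ plus the valuation of the reduced different of the maximal order $\mathfrak{O}_{D_v}$, which is $d_v-1$ since that different is $\mathfrak{P}_{D_v}^{\,d_v-1}$. In particular the term $d_v-1$ is a discriminant term intrinsic to $D_v$, uniform in $\pi_v'$, rather than anything assembled from $\Sp_m$ and $\Nrd$ on the $\GL$ side. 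If you want a proof in the spirit of your outline, you would instead invoke conductor preservation together with depth preservation under the Jacquet--Langlands correspondence and the known depth--conductor relations on both sides; but then you should cite those results explicitly rather than Koch--Zink, and you should recognize that they are much heavier input than the statement being proved.
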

\begin{remark}
One can state \eqref{10} equivalently in terms of the Moy-Prasad depth ( \cite{MP94} and \cite{MP96}).
\end{remark}

\begin{lemma}\label{11}
Let $\pi'_v$ be an irreducible admissible representation of $D_v^{\times}.$ Let $\chi_v$ be a quasi-character of $F_v^{\times}$ such that $l(\chi_v\circ\Nrd)\geq l(\pi_v').$ Then
$\A(\pi_v'\otimes\chi_v)=d_v\cdot\A(\chi_v).$
\end{lemma}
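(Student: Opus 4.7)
The plan is to bootstrap from the level--conductor formula \eqref{10} and the identity $l(\pi_v'\otimes\chi_v)=l(\chi_v\circ\Nrd)$ noted immediately before the statement; with these, the computation reduces to pinning down how the reduced norm transports the unit filtration of $D_v^{\times}$ to that of $F_v^{\times}$.

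First, I would apply \eqref{10} to the twist to get
\[
\A(\pi_v'\otimes\chi_v)=l(\pi_v'\otimes\chi_v)+d_v-1.
\]
Under the hypothesis $l(\chi_v\circ\Nrd)\geq l(\pi_v')$, the preceding observation replaces the level of the twist by $l(\chi_v\circ\Nrd)$, so it remains to establish the identity
\[
l(\chi_v\circ\Nrd)\;=\;d_v\,\A(\chi_v)-d_v+1
\]
(the regime that matters is $\A(\chi_v)\geq 1$, which is forced by the hypothesis whenever $\pi_v'$ is not an unramified character, the only case of interest for the application). Substituting back then produces $\A(\pi_v'\otimes\chi_v)=d_v\A(\chi_v)$ as desired.

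For the identity displayed above, I would invoke the classical fact (see Riehm, or \S12 of Bushnell--Henniart, or Deligne--Kazhdan--Vign\'eras) that the reduced norm induces surjective maps on the unit filtration,
\[
\Nrd\colon U_k(D_v)\twoheadrightarrow U_{\lceil k/d_v\rceil}(F_v),\qquad k\geq 1.
\]
With this, $\chi_v\circ\Nrd$ is trivial on $U_k(D_v)$ if and only if $\chi_v$ is trivial on $U_{\lceil k/d_v\rceil}(F_v)$, i.e.\ if and only if $\lceil k/d_v\rceil\geq \A(\chi_v)$. The smallest positive $k$ satisfying the latter is precisely $k=d_v(\A(\chi_v)-1)+1$, yielding the required formula for $l(\chi_v\circ\Nrd)$.

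The main technical input, and the only non-formal step, is the behavior of $\Nrd$ on the filtration. The containment $\Nrd(U_k(D_v))\subseteq U_{\lceil k/d_v\rceil}(F_v)$ is the easy direction, obtained by expanding the reduced characteristic polynomial of $1+y$ for $y\in\mathfrak{P}_{D_v}^k$ and reading off valuations of the coefficients. The reverse inclusion---surjectivity---is what pins down $l(\chi_v\circ\Nrd)$ (rather than merely bounding it above); this is proved by exhibiting explicit preimages level by level via a uniformizer $\pi_{D_v}$, using that $\Nrd(\pi_{D_v})$ is a uniformizer of $F_v$, together with a short induction on $k$ comparing successive graded pieces $U_k/U_{k+1}$. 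Once this surjectivity is in hand, the rest of the argument is a purely arithmetic computation that slots in cleanly.
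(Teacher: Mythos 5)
Your argument is correct and is essentially the paper's own proof: both apply the level--conductor formula \eqref{10} to the twist, identify $l(\pi_v'\otimes\chi_v)$ with $l(\chi_v\circ\Nrd)$ via the hypothesis, and compute $l(\chi_v\circ\Nrd)=d_v(\A(\chi_v)-1)+1$ from the fact that $\Nrd(U_k(D_v))=U_{\lceil k/d_v\rceil}(F_v)$. The only difference is that you make explicit the surjectivity of the reduced norm on the unit filtration (with references) and the ramified regime $\A(\chi_v)\geq 1$, both of which the paper leaves implicit.
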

\begin{proof}
Let $x\in F_v^{\times}.$ Then $\ord_{F_v}(\Nrd(x))=\ord_{F_v}(x^{d_v})=d_v\cdot\ord_{F_v}(x).$ Take $k=l(\chi_v\circ\Nrd).$ Then we have
\begin{align*}
\Nrd(U_k(D_v))=U_k(D_v)\cap F_v^{\times}=U_{\ceil{k/d_v}}(F_v),
\end{align*}
where $\ceil{\cdot}$ is the ceiling function. Note that $\chi_v$ is trivial on $U_{\ceil{k/d_v}}(F_v)$ if and only if $\chi_v\circ\Nrd$ is trivial on $U_k(D_v).$ Then by definition, we have
\begin{align*}
\A(\chi_v)\leq \ceil[\bigg]{\frac{l(\chi_v\circ\Nrd)}{d_v}},
\end{align*}
implying that $d_v\cdot (\A(\chi_v)-1)\leq l(\chi_v\circ\Nrd)-1.$ On the other hand, by definition, $l(\chi_v\circ\Nrd)$ is the minimal integer such that $\chi_v$ is trivial on $U_{\ceil{k/d_v}}(F_v).$ Hence
\begin{equation}\label{12}
l(\chi_v\circ\Nrd)=d_v\cdot (\A(\chi_v)-1)+1.
\end{equation}

Since $l(\chi_v\circ\Nrd)\geq l(\pi_v'),$ one has $l(\pi_v'\otimes\chi_v)= l(\chi_v\circ\Nrd).$ Note that $\pi_v'\otimes\chi_v$ is also irreducible. Applying \eqref{10} to $\pi_v'\otimes\chi_v$ we thus deduce
\begin{equation}\label{13}
\A(\pi'\otimes\chi_v)=l(\pi'\otimes\chi_v)+d_v-1.
\end{equation}
Then Lemma \ref{11} follows from \eqref{12} and \eqref{13}.
\end{proof}

\medskip

\subsection{The Local Jacquet-Langlands Correspondence}
We recall briefly the Jacquet-Langlands correspondence here. Let $\Irr(GL(d,F_v))$ (resp. $\Irr(D_v^{\times})$) be the set of equivalence classes of irreducible essentially square-integrable representations of $GL(d,F_v)$ (resp. $D_v^{\times}$). Then there exists a canonical bijection
\begin{align*}
\JL:\ \Irr(GL(d,F_v))\longrightarrow \Irr(D_v^{\times})
\end{align*}
satisfying some properties (e.g.  \cite{ABPS16}, Thm 2.2). Let $\pi_v$ be an irreducible essentially square-integrable representation of $GL(d,F_v).$ Let $\A(\pi_v)$ be the conductor exponent of $\pi_v.$ Denote by $\pi_v'=\JL(\pi_v).$ Then one also has $\A(\pi_v'\otimes\chi_v)=\A(\pi_v\otimes\chi_v),$ for any quasi-character $\chi_v$ of $F_v^{\times}.$

\medskip

\begin{prop}\label{3}
Let notation be as above. Assume $\pi_v$ is irreducible admissible and essentially square-integrable of $\GL(n,F_v)$. Then either
\begin{equation}\label{4}
n\cdot \A(\chi_v)\leq \max\big\{\A(\pi_v\otimes\chi_v), \A(\pi_v)\big\},
\end{equation}
where the equality holds if $\A(\pi_v\otimes\chi_v)\neq \A(\pi_v),$ or $\A(\pi_v)=n\cdot  \A(\chi_v).$
\end{prop}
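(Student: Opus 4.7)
The plan is to transport the statement to the division-algebra side via the local Jacquet--Langlands correspondence, where equations \eqref{10} and \eqref{12} reduce the problem to a comparison of the two integers $l(\pi_v')$ and $l(\chi_v\circ\Nrd)$.

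Let $D_v$ be the central division algebra over $F_v$ of dimension $n^2$ (so $d_v=n$), and set $\pi_v'=\JL(\pi_v)$, so that $\A(\pi_v)=\A(\pi_v')$ and $\A(\pi_v\otimes\chi_v)=\A(\pi_v'\otimes\chi_v)$. By \eqref{10}, $l(\pi_v')=\A(\pi_v)-n+1$; and by \eqref{12}, when $\chi_v$ is ramified, $l(\chi_v\circ\Nrd)=n\A(\chi_v)-n+1$. (If $\chi_v$ is unramified then $\A(\chi_v)=0$ and \eqref{4} is trivial.) Hence comparing $l(\pi_v')$ with $l(\chi_v\circ\Nrd)$ is the same as comparing $\A(\pi_v)$ with $n\A(\chi_v)$, which gives three cases.

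If $n\A(\chi_v)>\A(\pi_v)$, equivalently $l(\chi_v\circ\Nrd)>l(\pi_v')$, Lemma \ref{11} applies directly and yields $\A(\pi_v\otimes\chi_v)=n\A(\chi_v)>\A(\pi_v)$, so the maximum in \eqref{4} equals $n\A(\chi_v)$ with equality, and $\A(\pi_v\otimes\chi_v)\neq\A(\pi_v)$ is automatic. If $n\A(\chi_v)=\A(\pi_v)$, we are already in the second alternative of the proposition.

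In the remaining case $n\A(\chi_v)<\A(\pi_v)$ I will show $\A(\pi_v\otimes\chi_v)=\A(\pi_v)$, which forces the inequality in \eqref{4} to hold strictly while verifying the equality clause vacuously. On $U_{l(\pi_v')}(D_v)$ both $\pi_v'$ and $\chi_v\circ\Nrd$ are trivial (the latter because $l(\pi_v')>l(\chi_v\circ\Nrd)$), so $l(\pi_v'\otimes\chi_v)\leq l(\pi_v')$. For the reverse inequality, apply the same observation to the identity $(\pi_v'\otimes\chi_v)\otimes\chi_v^{-1}=\pi_v'$: this yields $l(\pi_v')\leq\max(l(\pi_v'\otimes\chi_v),l(\chi_v\circ\Nrd))$, and since $l(\chi_v\circ\Nrd)<l(\pi_v')$ in the present case, the right-hand maximum must be $l(\pi_v'\otimes\chi_v)$. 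The only delicate step is this last symmetry argument, ensuring that the level cannot drop under twisting when the character's level is strictly smaller; the rest is a direct appeal to the two lemmas.
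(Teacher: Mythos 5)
Your proposal is correct and follows essentially the same route as the paper: reduce via the Jacquet--Langlands correspondence to $D_v^{\times}$ and exploit the level--conductor formula \eqref{10}, formula \eqref{12}, and Lemma \ref{11}, with the inverse-twist symmetry handling the reverse comparison. The only difference is organizational (a clean trichotomy on $n\A(\chi_v)$ versus $\A(\pi_v)$, with a short direct argument that the level is unchanged when $l(\chi_v\circ\Nrd)<l(\pi_v')$, in place of the paper's arguments by contradiction), which yields the same conclusions.
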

\begin{proof}
Suppose $n\cdot \A(\chi_v)>\max\{\A(\pi_v\otimes\chi_v), \A(\pi_v)\}.$ Let $\pi_v'=\JL(\pi_v).$ Then $\A(\pi_v)=\A(\pi_v').$ Hence it follows from formulas \eqref{12}, \eqref{13} and the assumption $n\cdot \A(\chi_v)>\A(\pi_v)$ that $l(\chi_v\circ\Nrd)> l(\pi_v').$ We then deduce from Lemma \ref{11} that $\A(\pi_v'\otimes\chi_v)=n\cdot\A(\chi_v).$
Therefore, applying the Jacquet-Langlands correspondence we get  $\A(\pi_v\otimes\chi_v)=n\cdot \A(\chi_v),$ a contradiction! Thus \eqref{4} holds.

Now we suppose $\A(\pi_v\otimes\chi_v)\neq \A(\pi_v).$ Since $\A(\chi_v)=\A(\chi_v^{-1}),$ after twisting by $\chi_v^{-1}$ if needed, we may assume that $\A(\pi_v\otimes\chi_v)>\A(\pi_v),$ which amounts to $\A(\pi_v'\otimes\chi_v)>\A(\pi_v').$ Then by \eqref{13} one has $l(\pi_v'\otimes\chi_v)>l(\pi_v').$ Hence
\begin{equation}\label{14}
l(\chi_v\circ\Nrd)\geq l(\pi_v').
\end{equation}
Suppose further that equality in \eqref{4} does not hold. Then $n\cdot \A(\chi_v)< \A(\pi_v\otimes\chi_v)=\A(\pi_v'\otimes\chi_v).$ Then by Lemma \ref{11} we deduce that $l(\chi_v\circ\Nrd)<l(\pi_v'),$ contradicting \eqref{14}. In all, if $\A(\pi_v\otimes\chi_v)\neq \A(\pi_v),$ then $n\cdot \A(\chi_v)= \max\big\{\A(\pi_v\otimes\chi_v), \A(\pi_v)\big\}.$

Assume $\A(\pi_v)=n\cdot \A(\chi_v).$ Suppose at the same time that $\A(\pi_v\otimes\chi_v)> \A(\pi_v)=n\cdot \A(\chi_v).$ Then we have \eqref{14}, which implies, by Lemma \ref{11}, that $\A(\pi_v\otimes\chi_v)=n\cdot \A(\chi_v),$ contradicting our assumption. Hence $\A(\pi_v\otimes\chi_v)\leq \A(\pi_v).$ Therefore, the equality of \eqref{4} holds when $\A(\pi_v)=n\cdot \A(\chi_v).$
\end{proof}

\begin{remark}
In this Note, we haven't used the local Langlands correspondence. Proposition \ref{3} can be rephrased in terms of Artin conductors of Galois representations. If the Galois representation is irreducible (i.e. $\pi_v$ is supercuspidal), it has been pointed to us that Proposition \ref{3} admits a short proof purely in  
the theory of the Weil-Deligne group by using the following argument: the Artin conductor of V is the integral over $s$  
from $-1$ to infinity of $\codim (V^{ G^s})$, where $G^s$ is the $s$-th higher ramification group in the upper  
numbering filtration of the Galois group. So if V is irreducible, it  
is simply $\dim V$ times the infimum of the set of s where $G_s$ acts  
trivially on V, plus 1. Let $\sigma_v$ be the Weil-Deligne representation associated to $\pi_v$ via local Langlands correspondence. The stated inequality then is equivalent to the statement that if $G_s$  
acts nontrivially on $\chi$, then it acts nontrivially on $\sigma_v$ or  
($\chi \otimes\sigma_v$), which is clear.
\end{remark}

%\begin{cor}\label{2}
%Let notation be as before. Then
%\begin{equation}\label{9}
%n\cdot \A(\chi_v)\leq \max\big\{\A(\pi_v\otimes\chi_v), \A(\pi_v)\big\},
%\end{equation}
%where the equality holds if $\A(\pi_v\otimes\chi_v)\neq \A(\pi_v)$ or $\A(\pi_v)=n\cdot \A(\chi_v).$
%\end{cor}
%\begin{proof}
%The conclusion is obvious when each $\dim \sigma_{v,i}=1,$ $1\leq i\leq r.$ 	
%	Without loss of generality, we may assume $\sigma_v$ is indecomposable and not unramified. Then Corollary \ref{2} follows from Proposition \ref{3} and Bernstein-Zelevinsky classification (see \cite{Zel80}).
%\end{proof}
\medskip

\section{Proof of Theorem \ref{A}}\label{sec3}
Let us note, before commencing the proof of Theorem \ref{A}, that it is not hard to check that $Q\leq (N_1N_2)^n.$ By the factorizability of conductors, it suffices to see this at each finite place $v.$ If $\sigma_{i,v},$ $i=1,2,$ is the $n$-dimensional representation of the Weil-Deligne group at $F_v,$ associated to $\pi_{i,v},$ by the local Langlands correspondence then $\sigma_{2,v}\simeq \sigma_{1,v}\otimes\chi_v,$ which implies that $\chi_v\hookrightarrow \check{\sigma_{1,v}}\otimes\sigma_{2,v}$; thus $Q_v\leq (N_{1,v}N_{2,v})^n,$ since the conductor of $\check{\sigma_{1,v}}\otimes\sigma_{2,v}$ is bounded above by $N_{1,v}^nN_{2,v}^n.$ However, we do not appeal to anything from $p$-adic local Langlands correspondence.
\\

For a supercuspidal representation $\sigma$ of $\GL(k,F_v),$ we denote by $\sigma(l)$ the twist of $\sigma$ with the character $|\cdot|^l,$ i.e., the representation $g\mapsto |\det (g)|^l\sigma(g).$ For $m\geq 0,$ set $\Delta(\sigma,m)=[\sigma_j,\sigma_j(1),\cdots,\sigma_j(m_j-1)].$ Let $\sigma'$ be another supercuspidal representation of $\GL(k,F_v),$ and $m'\geq 0.$ We say $\Delta(\sigma,m)$ \textit{precedes} $\Delta(\sigma',m')$ if $\sigma'\cong \sigma(m).$

\medskip 

Let $\pi(\sigma,m)$ be the irreducible representation of $\GL(km,F_v)$ induced from $\sigma\times\sigma(1)\times\cdots\times\sigma(m-1).$ Denote by $Q(\Delta(\sigma,m))$ the unique irreducible quotient of $\pi(\sigma,m).$ Then by the Bernstein-Zelevinsky classification (see \cite{Zel80}), any smooth irreducible representation of $\GL(n,F_v)$ is isomorphic to the unique irreducible quotient $Q(\Delta(\sigma_1,m_1),\cdots,\Delta(\sigma_r,m_r))$ of $Q(\Delta(\sigma_1,m_1))\times\cdots\times Q(\Delta(\sigma_r,m_r)),$ for some supercuspidal representations $\sigma_j,$ such that $\Delta(\sigma_{j_1},m_{j_1})$ does not precede $\Delta(\sigma_{j_2},m_{j_2})$ for $j_1<j_2.$ We now use this classification to prove our main theorem.

\begin{proof}[Proof of Theorem \ref{A}]

\medskip 
	
Write $\pi_i=\otimes_v'\pi_{i,v},$ $1\leq i\leq 2.$ Let $v$ be a nonarchimedean place. Then $\pi_{i,v}$ is a smooth irreducible representation of $\GL(n,F_v).$ Hence, by Bernstein-Zelevinsky classification, $\pi_{1,v}$ is isomorphic to a representation of the form $Q(\Delta_1,\cdots,\Delta_r)$ for a unique (up to permutation) collection of intervals $\Delta_j=\Delta(\sigma_j,m_j),$ with $\sigma_j$ a supercuspidal representation of some $\GL(n_j,F_v),$ $1\leq j\leq r,$ $\sum_jm_jn_j=n;$ such that $\Delta_i$ does not precede $\Delta_j$ for $i<j.$ Moreover, each $Q(\Delta_j)$ is essentially square-integrable.

Then it follows from $\pi_{2,v}\simeq\pi_{1,v}\otimes\chi_v$ and uniqueness of the quotient, that $\pi_{2,v}$ is isomorphic to a representation of the form $Q(\Delta_1',\cdots,\Delta_r'),$ where $\Delta_j'=\Delta(\sigma_j\otimes\chi_v,m_j),$ $1\leq j\leq r.$ Then by Proposition \ref{3} we have, for each $1\leq i\leq r,$ that
\begin{align*}
m_jn_j\cdot \A(\chi_v)\leq \max\big\{\A(Q(\Delta_j)\otimes\chi_v), \A(Q(\Delta_j))\big\},
\end{align*}
namely, $m_jn_i\cdot \A(\chi_v)\leq\max\big\{\A(Q(\Delta_j')), \A(Q(\Delta_j))\big\}.$ Summing through all $1\leq j\leq r$ one then obtain
\begin{align*}
\sum_{j=1}^rm_jn_j\cdot \A(\chi_v)\leq \sum_{j=1}^r \max\big\{\A(Q(\Delta_j')), \A(Q(\Delta_j))\big\}.
\end{align*}
Note that $\sum_{j=1}^rm_jn_j=n.$ Hence,
\begin{equation}\label{a}
n\A(\chi_v)\leq \sum_{j=1}^r \max\big\{\A(Q(\Delta_j')), \A(Q(\Delta_j))\big\},
\end{equation}
from which we deduce that $Q_v^n$ divides $N_{1,v}N_{2,v}.$ Thus \eqref{0} follows.
\medskip

Suppose further for every finite place $v,$ $\pi_{1,v}$ is a discrete series whenever $\pi_{1,v}$ is ramified. Let $v$ be a nonarchimedean place as above. Then $r=1.$ So we have by \eqref{a} that $n\A(\chi_v)\leq\max\big\{\A(Q(\Delta_1')), \A(Q(\Delta_1))\big\}=\max\{\A(\pi_{1,v}), \A(\pi_{2,v})\},$ from which we deduce that $Q_v^n$ divides $[N_{1,v}, N_{2,v}].$ Now Theorem \ref{A} follows from the above analysis and the following claim:
\begin{claim}\label{32}
Let notation be as before. Assume $n=2$ and $\pi_1$ has trivial central character. Then $Q^2\mid [N_1, N_2].$
\end{claim}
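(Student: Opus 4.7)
The plan is to reduce to a purely local statement at each non-archimedean place $v$, as in the main proof, and then handle the one remaining case not covered by the Bernstein--Zelevinsky/discrete-series argument: namely when $\pi_{1,v}$ (equivalently $\pi_{2,v}$) is an irreducible ramified principal series. The key observation is that in the $n=2$, trivial central character case, such a principal series has a very rigid shape that forces the conductor bound by hand.

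More precisely, I would argue as follows. By the factorizability of the conductor it suffices to show that for each non-archimedean $v$,
\[
2\,\A(\chi_v)\ \leq\ \max\big\{\A(\pi_{1,v}),\A(\pi_{2,v})\big\}.
\]
If $\pi_{1,v}$ is unramified or in the discrete series, this is precisely the inequality \eqref{a} specialised to $r=1$, $n=2$, and we are done. Otherwise $\pi_{1,v}$ is an irreducible ramified principal series. Since $\pi_1$ has trivial central character, $\omega_{\pi_{1,v}}=1$, so
\[
\pi_{1,v}\ \simeq\ \pi(\mu_v,\mu_v^{-1})
\]
for some ramified quasi-character $\mu_v$ of $F_v^\times$, and consequently
\[
\pi_{2,v}\ \simeq\ \pi_{1,v}\otimes\chi_v\ \simeq\ \pi(\mu_v\chi_v,\mu_v^{-1}\chi_v).
\]
Using the standard formula $\A(\pi(\alpha,\beta))=\A(\alpha)+\A(\beta)$ for irreducible principal series, together with $\A(\mu_v^{-1})=\A(\mu_v)$, we get $\A(\pi_{1,v})=2\A(\mu_v)$ and $\A(\pi_{2,v})=\A(\mu_v\chi_v)+\A(\mu_v^{-1}\chi_v)$.

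Now I would split into three cases according to the comparison of $\A(\chi_v)$ and $\A(\mu_v)$, using repeatedly the elementary fact that $\A(\alpha\beta)\leq\max\{\A(\alpha),\A(\beta)\}$, with equality whenever the two are unequal. If $\A(\chi_v)>\A(\mu_v)$, then $\A(\mu_v\chi_v)=\A(\mu_v^{-1}\chi_v)=\A(\chi_v)$, so $\A(\pi_{2,v})=2\A(\chi_v)$ and the inequality is an equality. If $\A(\chi_v)<\A(\mu_v)$, then $\A(\pi_{1,v})=2\A(\mu_v)>2\A(\chi_v)$. Finally, if $\A(\chi_v)=\A(\mu_v)$, then $\A(\pi_{1,v})=2\A(\mu_v)=2\A(\chi_v)$, again giving the bound. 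Combining the three cases with the discrete-series case yields $Q_v^2\mid [N_{1,v},N_{2,v}]$ for every $v$, and hence the global statement $Q^2\mid[N_1,N_2]$.

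I do not expect any serious obstacle: the only place the first half of Theorem~A can fail to give $[N_1,N_2]$ in place of $N_1N_2$ is at a ramified principal series place, and the assumption of trivial central character in the $n=2$ setting pins the inducing characters to be mutually inverse, which is exactly the symmetry that forces $\A(\pi_{2,v})$ (or $\A(\pi_{1,v})$) to absorb $2\A(\chi_v)$ in every case. The mildly delicate point is the case $\A(\chi_v)=\A(\mu_v)$, where $\A(\mu_v\chi_v)$ can collapse; this is handled not by controlling $\A(\pi_{2,v})$ but by noting that $\A(\pi_{1,v})$ already equals $2\A(\chi_v)$.
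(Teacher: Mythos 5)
Your argument is correct and is essentially the paper's own proof: the paper likewise reduces to the principal-series places, writes $\pi_{1,v}=\chi_{1,v}\boxplus\chi_{2,v}$ with $\A(\chi_{1,v})=\A(\chi_{2,v})$ forced by the trivial central character, and compares $\A(\chi_v)$ with $\A(\chi_{1,v})$, invoking Proposition \ref{3} (in its $n=1$ case) exactly where you use the elementary fact that $\A(\alpha\beta)\leq\max\{\A(\alpha),\A(\beta)\}$ with equality when the two conductors differ. One cosmetic slip: an unramified local component is a spherical principal series with $r=2$, not $r=1$, so it is not covered by the discrete-series reduction as you state; however, your own three-case analysis handles it verbatim once $\mu_v$ is allowed to be unramified, so nothing is lost.
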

\end{proof}
\begin{proof}[Proof of Claim \ref{32}]
Let notation be as in the proceeding proof. From the above proof we know that $2\A(\chi_v)\leq \max\{\A(\pi_{1,v}), \A(\pi_{2,v})\}$ if $r=1.$ Hence, it suffices to consider the case when $r=2,$ in which case $\pi_{i,v}$ is a principal series, $1\leq i\leq 2.$ We may write $\pi_{1,v}=\chi_{1,v}\boxplus\chi_{2,v}$ and thus $\pi_{2,v}\simeq \chi_v\chi_{1,v}\boxplus\chi_v\chi_{2,v}.$
\medskip

On the other hand, since $\pi_1$ has trivial central character, $\A(\chi_{1,v})=\A(\chi_{2,v}).$ Suppose $\A(\chi_v)\leq \A(\chi_{1,v}).$ Then $2\A(\chi_v)\leq \A(\chi_{1,v})+\A(\chi_{2,v})=\A(\pi_{1,v}),$ which is no more than $\max\{\A(\pi_{1,v}), \A(\pi_{2,v})\}.$ Therefore, we may suppose $\A(\chi_v)> \A(\chi_{1,v}).$ By Proposition \ref{3} we deduce that $\A(\chi_v)\leq \max\{\A(\chi_{j,v}),\A(\chi_v\chi_{j,v})\},$ for $1\leq j\leq 2;$ moreover, since $\A(\chi_{1,v})=\A(\chi_{2,v})<\A(\chi_v)\leq \max\{\A(\chi_{j,v}),\A(\chi_v\chi_{j,v})\},$ $j=1, 2,$ then from the condition when the equality holds therein, we obtain that $\A(\chi_v)=\A(\chi_v\chi_{j,v}),$ for $j=1, 2.$ Therefore, we deduce that 
\begin{align*}
2\A(\chi_v)\leq \A(\chi_v\chi_{1,v})+\A(\chi_v\chi_{2,v})=\A(\pi_{2,v})\leq \max\{\A(\pi_{1,v}), \A(\pi_{2,v})\}.
\end{align*}
Hence Claim \ref{32} follows.
\end{proof}

This completes the proof of Theorem \ref{A}. Moreover, towards the extreme cases of Theorem \ref{A}, we have the following:
\begin{itemize}
\item[1.] Let notation be as before. Let $\pi_1=\Ind(\chi_1,\chi_2)$ be an induced representation of $GL(2,\mathbb{A}_{\mathbb{Q}}).$ Let $p_1, p_2$ be two distinct primes. Suppose $\chi_i$ has arithmetic conductor $p_i,$ $1\leq i\leq 2.$ Let $\chi=\chi_1^{-1}\chi_2^{-1},$ and $\pi_2=\pi_1\otimes\chi.$ Then both $\pi_1$ and $\pi_2$ have arithmetic conductor $p_1p_2,$ namely, $N_1=N_2=p_1p_2.$ Also, $\chi$ has arithmetic conductor $Q=p_1p_2.$ And we thus have
$Q^2=N_1N_2.$
\medskip
\item[2.]
Let $\pi_1$ be a cuspidal representation on $GL(n,\mathbb{A}_F)$ such that for every finite place $v,$ $\pi_{1,v}$ is a discrete series whenever $\pi_{1,v}$ is ramified. Suppose further that $n\mid \A(\pi_{1,v}),$ for each finite place $v.$ Let $\chi$ be an idele class character of Artin conductor exponent $\A(\chi_v)=\A(\pi_{1,v})/n,$ for any finite place $v.$ Let $\pi_2=\pi_1\otimes \chi.$ Then it follows from Proposition \ref{3} that $n\A(\chi_v)=\max\{\A(\pi_{1,v}), \A(\pi_{2,v})\},$ for all finite place $v.$ Thus $Q^n=[N_1, N_2]$ in this case.
\end{itemize}

\section{Comparison of Analytic Conductors}
Let $\pi$ be unitary cuspidal representations on $GL(n,\mathbb{A}_F),$ where $F$ is a global field. Denote by $\Sigma$ the set of places of $F.$ Then $\pi=\otimes'_v\pi_{v}.$ Let $L(s,\pi)$ be the principal $L$-function associated to $\pi.$ Then $L(s,\pi)=\prod_{v\in\Sigma}L_v(s,\pi_v),$ where $\Re(s)\gg0.$ Let $\Sigma_{\infty}$ be the set of archimedean places of $F.$ To define analytic conductor of $\pi,$ we need to recall the definition of each $L_v(s,\pi_v)$ for $v\in\Sigma_{\infty}.$

In this section,we fix a place $v\in\Sigma_{\infty},$ denote by $\sigma_v$ the $n$-dimensional Weil-Deligne representation corresponding to $\pi_v$ under the local Langlands correspondence. Then $L_v(s,\pi_v)=L_v(s,\sigma_v).$ Moreover, let $\chi_v$ be a quasi-character of $F_v^{\times},$ we have $L_v(s,\pi_v\times\chi_v)=L_v(s,\sigma_v\otimes\chi_v).$ Hence, it suffices to recall the definition of $L_v(s,\sigma_v\otimes\chi_v),$ which is a product of Gamma functions. Write
\begin{align*}
\sigma_{v}=\oplus_{j=1}^{r_v}\sigma_{v,j},
\end{align*}
where each $\sigma_{v,j}$ is an irreducible representation of the Weil group $\mathcal{W}_{F,v}.$ Hence,
\begin{equation}\label{15}
L_v(s,\sigma_v\otimes\chi_v)=\prod_{j=1}^{r_v}L_v(s,\sigma_{v,j}\otimes\chi_v).
\end{equation}

To define archimedean conductor, we shall describe each $L_v(s,\sigma_{v,j}\otimes\chi_v)$ explicitly. Since our approach is using Langlands classification ( \cite{Del73}), we will separate the cases when $F_{v}\simeq \mathbb{R}$ and $F_{v}\simeq \mathbb{C}.$
\begin{itemize}
	\item[Case 1:] Assume that $F_v\simeq \mathbb{C}.$ One has $\mathcal{W}_{F,v}\simeq\mathbb{C}^{\times}.$ So all irreducible representations are one dimensional. We may write any such characters as $\tau_{k,\nu}(z)=(z/|z|)^k|z|_{\mathbb{C}}^{\nu}=(z/|z|)^k|z|^{2\nu},$ for $k\in \mathbb{Z}$ and $\nu\in\mathbb{C}.$ The local $L$-function associated to this character is $L_v(s,\tau_{k,\nu})=\Gamma_{\mathbb{C}}(s+\nu+|k|/2),$ where $\Gamma_{\mathbb{C}}(s):=2(2\pi)^{-s}\Gamma(s).$
	
	Let $\sigma_{v,j}=\tau_{k_j,\nu_j},$ $k_j\in \mathbb{Z}$ and $\nu_j\in\mathbb{C},$ $1\leq j\leq r.$ Also, one can write $\chi_v$ as $\tau_{k',\nu'}$ for some $k'\in \mathbb{Z}$ and $\nu'\in\mathbb{C}.$ Since $\tau_{k,\nu}\otimes \tau_{k',\nu'}=\tau_{k+k',\nu+\nu'},$ we then have
\begin{align*}
L_v(s,\sigma_{v,j}\otimes\chi_v)=L_v(s,\tau_{k_j+k',\nu_j+\nu'})=\Gamma_{\mathbb{C}}(s+\nu_j+\nu'+|k_j+k'|/2).
\end{align*}
Define the archimedean conductor of $\sigma_{v,j}\otimes\chi_v$ in this case to be
\begin{equation}\label{16}
C_v(\sigma_{v,j}\otimes\chi_v)=(1+|\nu_j+\nu'+|k_j+k'|/2|)^2.
\end{equation}
\medskip
	\item[Case 2:] Assume that $F_v\simeq \mathbb{R}.$ One has $\mathcal{W}_{F,v}\simeq\mathbb{C}^{\times}\sqcup\boldsymbol{j}\mathbb{C}^{\times},$ where $\boldsymbol{j}^2=-1$ and $\boldsymbol{j}z\boldsymbol{j}^{-1}=\bar{z}$ for any $z\in\mathbb{C}^{\times}.$ Hence each irreducible representation $\sigma_{v,j}$ of $\mathcal{W}_{F,v}$ is of dimension 1 or 2.
	\begin{itemize}
	\item[(a)] If $\dim\sigma_{v,j}=1,$ then its restriction to $\mathbb{C}^{\times}$ is of the form $\tau_{0,\nu_j}$ for some $\nu_j\in\mathbb{C}$ ( (3.2) of \cite{K94}). Also, we can write $\chi_v=\tau_{0,\nu'}$ for some $\nu'\in\mathbb{C}.$ In this case, we have
	\begin{align*}
	L_v(s,\sigma_{v,j}\otimes\chi_v)=\Gamma_{\mathbb{R}}(s+\nu_j+\nu'+(1-\sigma_{v,j}(\boldsymbol{j})\chi_v(\boldsymbol{j}))/2),
	\end{align*}
	where $\Gamma_{\mathbb{R}}(s):=\pi^{-s/2}\Gamma(s/2).$
	Define the archimedean conductor of $\sigma_{v,j}\otimes\chi_v$ in this case to be
	\begin{equation}\label{17}
C_v(\sigma_{v,j}\otimes\chi_v)=1+|\nu_j+\nu'+(1-\sigma_{v,j}(\boldsymbol{j})\chi_v(\boldsymbol{j}))/2|.
	\end{equation}
	
	\item[(b)] If $\dim\sigma_{v,j}=2,$ we may assume that $\sigma_{v,j}$ is induced from $\mathbb{C}^{\times}$ to $GL(2,\mathbb{R})$ by $\tau_{k_j,\nu_j},$ where $k_j\in\mathbb{N}_{\geq 1}$ and $\nu_j\in\mathbb{C}.$ Then $\sigma_{v,j}\otimes\chi_v$ is induced from $\mathbb{C}^{\times}$ by $\tau_{k_j,\nu_j+\nu'}.$ The $L$-factor is defined to be
	\begin{align*}
	L_v(s,\sigma_{v,j}\otimes\chi_v)=\Gamma_{\mathbb{C}}(s+\nu_j+\nu'+k_j/2).
	\end{align*}
	Define the archimedean conductor of $\sigma_{v,j}\otimes\chi_v$ in this case to be
	\begin{equation}\label{18}
	C_v(\sigma_{v,j}\otimes\chi_v)=(1+|\nu_j+\nu'+k_j/2|)^2.
	\end{equation}
	\end{itemize}
	\end{itemize}
\medskip
\begin{defn}
Let notation be as above. Define the \textit{archimedean conductor} of $\pi_v\otimes\chi_v$ to be
\begin{align*}
C_v(\pi_v\otimes\chi_v)=C_v(\sigma_v\otimes\chi_v)=\prod_{j=1}^{r_v}C_v(\sigma_{v,j}\otimes\chi_v),
\end{align*}
where each $C_v(\sigma_{v,j}\otimes\chi_v)$ is defined via \eqref{16}, \eqref{17} or \eqref{18}. Let $N(\pi\times\chi)$ be the arithmetic conductor of $\pi\times\chi.$ We set
\begin{align*}
C_{\infty}(\pi\times\chi)=\prod_{v\in\Sigma_{\infty}}C_v(\pi_v\otimes\chi_v)
\end{align*}
to be the archimedean conductor of $\pi\times\chi.$ And let
$$C(\pi\times\chi)=N(\pi\times\chi)\cdot C_{\infty}(\pi\times\chi)$$ be the \textit{analytic conductor} of $\pi\times\chi.$
\end{defn}

Recall that $\pi_1$ and $\pi_2$ are unitary cuspidal representations on $GL(n,\mathbb{A}_F).$ Denote by $\sigma_{i,v}$ the Weil-Deligne representation associated to $\pi_{i,v},$ $1\leq i\leq 2.$ Assume $\pi_1\otimes\chi\simeq\pi_2.$ Then $\sigma_{1,v}\otimes\chi_v\simeq\sigma_{2,v},$ for all $v\in\Sigma_{\infty}.$ Let
\begin{align*}
\sigma_{i,v}=\oplus_{j=1}^{r_v}\sigma_{i,v;j},\quad 1\leq i\leq 2,
\end{align*}
where each $\sigma_{i,v;j}$ is an irreducible representation of $\mathcal{W}_{F,v}.$ Hence,
\begin{equation}\label{19}
\sigma_{1,v;j}\otimes\chi_v=\sigma_{2,v;j},\quad 1\leq j\leq d_v,\  v\in\Sigma_{\infty}.
\end{equation}
\begin{itemize}
	\item[Case 1:] Assume that $F_v\simeq \mathbb{C}.$ One can write $\sigma_{i,v;j}=\tau_{k_{i,j},\nu_{i,j}},$ $k_{i,j}\in \mathbb{Z}$ and $\nu_{i,j}\in\mathbb{C},$ $1\leq j\leq r.$ Also, one can write $\chi_v$ as $\tau_{k',\nu'}$ for some $k'\in \mathbb{Z}$ and $\nu'\in\mathbb{C}.$ Then \eqref{19} yields $k_{1,j}+k'=k_{2,j}$ and $\nu_{1,j}+\nu'=\nu_{2,j}.$ Hence
	\begin{equation}\label{21}
	C_v(\chi_v)=(1+|\nu_{2,j}-\nu_{1,j}+|k_{2,j}-k_{1,j}|/2|)^2.
	\end{equation}
	\medskip
	\item[Case 2:] Assume that $F_v\simeq \mathbb{R}.$ If $\dim\sigma_{i,v;j}=1,$ then its restriction to $\mathbb{C}^{\times}$ is of the form $\tau_{0,\nu_{i,j}}$ for some $\nu_{i,j}\in\mathbb{C}.$ Also, we can write $\chi_v=\tau_{0,\nu'}$ for some $\nu'\in\mathbb{C}.$ In this case, we have $\nu_{1,j}+\nu'=\nu_{2,j}.$ Hence
	\begin{equation}\label{22}
	C_v(\chi_v)=1+|\nu_{2,j}-\nu_{1,j}+(1-\sigma_{2,v;j}\sigma_{1,v;j}^{-1}(\boldsymbol{j}))/2|.
	\end{equation}
	If $\dim\sigma_{i,v;j}=2,$ we may assume that $\sigma_{i,v;j}$ is induced from $\mathbb{C}^{\times}$ to $GL(2,\mathbb{R})$ by $\tau_{k_{i,j},\nu_{i,j}},$ where $k_{i,j}\in\mathbb{N}_{\geq 1}$ and $\nu_{i,j}\in\mathbb{C}.$ Then $\sigma_{1,v;j}\otimes\chi_v$ is induced from $\mathbb{C}^{\times}$ by $\tau_{k_{1,j},\nu_{1,j}+\nu'}.$ Hence $k_{1,j}=k_{2,j}$ and $\nu_{1,j}+\nu'=\nu_{2,j},$ implying
\begin{equation}\label{23}
C_v(\chi_v)=1+|\nu_{2,j}-\nu_{1,j}+(1-\sigma_{2,v;j}\sigma_{1,v;j}^{-1}(\boldsymbol{j}))/2|.
\end{equation}
\end{itemize}
\medskip
\begin{lemma}\label{24}
Assume $F_v\simeq \mathbb{R}.$ Then
\begin{equation}\label{25}
C_v(\chi_v)\leq 3\cdot [C_v(\sigma_{1,v})C_v(\sigma_{2,v})]^{1/n}.
\end{equation}
Moreover, the above bound is sharp.
\end{lemma}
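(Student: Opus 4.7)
My strategy is to reduce the inequality to a per-component estimate on the decomposition $\sigma_{i,v} = \bigoplus_{j=1}^{r_v} \sigma_{i,v;j}$, then recombine via a weighted geometric mean. Write $\chi_v|_{\mathbb{C}^\times} = \tau_{0,\nu'}$ and set $\zeta := \chi_v(\boldsymbol{j}) \in \{\pm 1\}$, so that $\epsilon_\chi := (1-\zeta)/2 \in \{0,1\}$ and $C_v(\chi_v) = 1+|\nu'+\epsilon_\chi|$. Let $d_j := \dim \sigma_{i,v;j} \in \{1,2\}$, $P_j := C_v(\sigma_{1,v;j}) C_v(\sigma_{2,v;j})$, and $P := \prod_j P_j = C_v(\sigma_{1,v}) C_v(\sigma_{2,v})$; the target is $C_v(\chi_v) \leq 3 P^{1/n}$. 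Using the relations $\nu_{2,j} = \nu_{1,j} + \nu'$ (and $k_{1,j} = k_{2,j}$ when $d_j = 2$) together with the triangle inequality, I would first establish the per-component bound $P_j^{1/d_j} \geq 1 + |\nu' + \delta_j|$, where $\delta_j = 0$ if $d_j = 2$ or $\zeta = 1$, and $\delta_j = \epsilon_{2,j} - \epsilon_{1,j} \in \{\pm 1\}$ otherwise.

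The next step splits on $\zeta$. If $\zeta = 1$, every $\delta_j$ vanishes, and the weighted geometric mean $P^{1/n} = \prod_j (P_j^{1/d_j})^{d_j/n}$ (weights $d_j/n$ summing to $1$) immediately gives $P^{1/n} \geq 1 + |\nu'| = C_v(\chi_v)$. If $\zeta = -1$, introduce $X := 1+|\nu'+1|$, $Y := 1+|\nu'-1|$, $Z := 1+|\nu'|$, let $J_2 := \{j : d_j = 2\}$, and partition the dim-$1$ indices as $J_1 = A \sqcup B$ with $\delta_j = +1$ on $A$ and $\delta_j = -1$ on $B$. Then $P^{1/n} \geq Z^{2|J_2|/n} X^{|A|/n} Y^{|B|/n}$, and after cancelling $X^{|A|/n}$ the target \eqref{25} reduces to
\[
X^{(2|J_2|+|B|)/n} \leq 3\, Z^{2|J_2|/n}\, Y^{|B|/n}.
\]
This follows from the elementary estimates $X \leq 2Z$ and $X \leq 3Y$ (both using $Y, Z \geq 1$), combined with $2^a 3^b \leq 3^{a+b} \leq 3$ applied to $a = 2|J_2|/n$, $b = |B|/n$ (since $a+b \leq 1$).

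The main subtlety is precisely the $\zeta = -1$ regime: a dim-$1$ component with $\delta_j = -1$ can have $P_j$ as small as $1 + |\nu' - 1|$, arbitrarily smaller than $C_v(\chi_v) = 1 + |\nu' + 1|$ when $\nu' \approx 1$, so the constant $3$ is intrinsic and cannot be shaved off by any per-component argument of this type. For sharpness I would exhibit, for any $n$, the direct sum of $n$ copies of the one-dimensional datum $\sigma_{1,v}|_{\mathbb{C}^\times} = \tau_{0,-1}$ with $\sigma_{1,v}(\boldsymbol{j}) = -1$, paired with the character $\chi_v$ having $\nu' = 1$ and $\chi_v(\boldsymbol{j}) = -1$: a direct computation yields $C_v(\sigma_{1,v}) = C_v(\sigma_{2,v}) = 1$ and $C_v(\chi_v) = 3$, so both sides of \eqref{25} equal $3$.
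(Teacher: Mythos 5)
Your argument is correct, and it shares the paper's overall architecture -- decompose $\sigma_{i,v}$ into irreducibles, prove a triangle-inequality estimate component by component, and recombine using $r_1+2r_2=n$ -- but the per-component step is organized differently. The paper proves upper bounds $C_v(\chi_v)\le 3\,C_v(\sigma_{1,v;j})C_v(\sigma_{2,v;j})$ for one-dimensional components and $C_v(\chi_v)^2\le 4\,C_v(\sigma_{1,v;j})C_v(\sigma_{2,v;j})$ for two-dimensional ones, then multiplies and uses $3^{r_1}4^{r_2}\le 3^{n}$; in the one-dimensional case with $\sigma_{1,v;j}(\boldsymbol{j})=\sigma_{2,v;j}(\boldsymbol{j})=-1$ it invokes the Jacquet--Shalika bound $|\Re(\nu_{i,j})|\le 1/2$. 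You instead prove lower bounds $P_j^{1/d_j}\ge 1+|\nu'+\delta_j|$ keyed to the sign $\chi_v(\boldsymbol{j})$, which I checked and which need no Jacquet--Shalika input at all (indeed the paper's appeal to it is avoidable, since $|\nu_{2,j}-\nu_{1,j}|=|(\nu_{2,j}+1)-(\nu_{1,j}+1)|$); your split gives the constant $1$ rather than $3$ when $\chi_v(\boldsymbol{j})=1$, and isolates exactly where the constant $3$ enters when $\chi_v(\boldsymbol{j})=-1$, via $X\le 2Z$, $X\le 3Y$ and the exponent bookkeeping $2^{a}3^{b}\le 3^{a+b}\le 3$, which replaces the paper's cruder $3^{r_1}4^{r_2}\le 3^n$. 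The recombination steps (the weighted geometric mean, the cancellation of $X^{|A|/n}$, and the constraint $2|J_2|+|B|\le n$) all check out. Your sharpness example is essentially the paper's ($n$ one-dimensional components, $\nu_{1,j}=-1$, $\nu_{2,j}=0$, opposite signs at $\boldsymbol{j}$, so $\nu'=1$ and both sides equal $3$); note only, as a caveat you share with the paper, that this datum has $|\Re(\nu_{1,j})|=1$, so it does not literally arise from a unitary cuspidal $\pi_1$ under the Jacquet--Shalika constraint, and ``sharp'' should be read as sharpness of the local inequality over all admissible parameters.
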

\begin{proof}
Since $\dim\sigma_{1,v;j}=1$ or $2,$ we shall discuss the two cases separately.
\begin{itemize}
\item[Case I.] Suppose $\dim\sigma_{1,v;j}=1.$ Then $\dim\sigma_{2,v;j}=1.$ Hence by \eqref{22} we have
\begin{align*}
C_v(\chi_v)=1+|\nu_{2,j}-\nu_{1,j}+(1-\sigma_{2,v;j}\sigma_{1,v;j}^{-1}(\boldsymbol{j}))/2|.
\end{align*}
\begin{itemize}
	\item[(a).] Assume $\sigma_{2,v;j}(\boldsymbol{j})\sigma_{1,v;j}(\boldsymbol{j})=1.$ If $\sigma_{2,v;j}(\boldsymbol{j})=\sigma_{1,v;j}(\boldsymbol{j})=1,$ then $C_v(\chi_v)=1+|\nu_{2,j}-\nu_{1,j}|\leq 1+|\nu_{2,j}|+|\nu_{1,j}|= 1+|\nu_{2,j}+(1-\sigma_{2,v;j}(\boldsymbol{j}))/2|+|\nu_{1,j}+(1-\sigma_{1,v;j}(\boldsymbol{j}))/2|.$ Thus $C_v(\chi_v)\leq C_v(\sigma_{1,v;j})C_v(\sigma_{2,v;j}).$
	
	\medskip
	Suppose $\sigma_{2,v;j}(\boldsymbol{j})=\sigma_{1,v;j}(\boldsymbol{j})=-1.$ By Corollary 2.5 of \cite{JS81}, $|\Re(\nu_{1,j})|\leq 1/2$ and $|\Re(\nu_{2,j})|\leq 1/2.$ Thus $|\nu_{i,j}|\leq |\nu_{i,j}+1|=|\nu_{i,j}+(1-\sigma_{i,v;j}(\boldsymbol{j}))/2|,$ $1\leq i\leq 2.$ Therefore,  $C_v(\chi_v)=1+|\nu_{2,j}-\nu_{1,j}|\leq 1+|\nu_{2,j}|+|\nu_{1,j}|\leq 1+|\nu_{2,j}+(1-\sigma_{2,v;j}(\boldsymbol{j}))/2|+|\nu_{1,j}+(1-\sigma_{1,v;j}(\boldsymbol{j}))/2|.$ Hence, again, we have  $C_v(\chi_v)\leq C_v(\sigma_{1,v;j})C_v(\sigma_{2,v;j}).$
	\medskip
	
	\item[(b).] Assume $\sigma_{2,v;j}(\boldsymbol{j})\sigma_{1,v;j}(\boldsymbol{j})=-1.$ Then $C_v(\chi_v)=1+|\nu_{2,j}-\nu_{1,j}+1|.$ If $\sigma_{2,v;j}(\boldsymbol{j})=-1,$ then $C_v(\chi_v)\leq 1+|\nu_{2,j}+1|+|\nu_{1,j}|\leq (1+|\nu_{1,j}|)(1+|\nu_{2,j}+1|)=C_v(\sigma_{1,v;j})C_v(\sigma_{2,v;j}).$  If $\sigma_{2,v;j}(\boldsymbol{j})=1,$ then $C_v(\chi_v)=1+|\nu_{2,j}-\nu_{1,j}+1|\leq 1+|\nu_{1,j}+1|+|\nu_{2,j}|+2\leq 3(1+|\nu_{2,j}|)(1+|\nu_{1,j}+1|),$ namely, we have $C_v(\chi_v)=3C_v(\sigma_{1,v;j})C_v(\sigma_{2,v;j}).$
\end{itemize}
 In all, if $\dim\sigma_{1,v;j}=1,$ then we deduce that
 \begin{equation}\label{28}
 C_v(\chi_v)=3\cdot C_v(\sigma_{1,v;j})C_v(\sigma_{2,v;j}).
 \end{equation}
\medskip
\item[Case II.]  Suppose $\dim\sigma_{1,v;j}=2.$ Then $\dim\sigma_{2,v;j}=2.$ Hence by \eqref{23} we have
\begin{align*}
C_v(\chi_v)=1+|\nu_{2,j}-\nu_{1,j}+(1-\sigma_{2,v;j}\sigma_{1,v;j}^{-1}(\boldsymbol{j}))/2|.
\end{align*}
Let $\sigma_{i,v;j}$ be induced from $\mathbb{C}^{\times}$ by $\tau_{k_{i,j},\nu_{i,j}},$ where $k_{i,j}\in\mathbb{N}_{\geq 1}$ and $\nu_{i,j}\in\mathbb{C},$ $1\leq i\leq 2.$ Then $\sigma_{1,v;j}\otimes\chi_v$ is induced from $\mathbb{C}^{\times}$ by $\tau_{k_{1,j},\nu_{1,j}+\nu'}.$ Hence $k_{1,j}=k_{2,j}$ and $\nu_{1,j}+\nu'=\nu_{2,j}.$ Then by triangle inequality we have
\begin{align*}
C_v(\chi_v)&=1+|\nu_{2,j}-\nu_{1,j}+(1-\sigma_{2,v;j}\sigma_{1,v;j}^{-1}(\boldsymbol{j}))/2|\\
&\leq 2+|\nu_{2,j}+k_{2,j}/2|+|\nu_{1,j}+k_{1,j}/2|\\
&\leq 2(1+|\nu_{1,j}+k_{1,j}/2|)(1+|\nu_{2,j}+k_{2,j}/2|).
\end{align*}
Recall that in this case $C_v(\sigma_{i,v;j})=(1+|\nu_{i,j}+k_{i,j}/2|)^2,$ $1\leq i\leq 2.$ So
 \begin{equation}\label{29}
C_v(\chi_v)^2\leq 4\cdot C_v(\sigma_{1,v;j})C_v(\sigma_{2,v;j}).
\end{equation}
\end{itemize}
\medskip
Let $\sigma_{i,v}=\oplus_{j=1}^{r_v}\sigma_{i,v;j}$ be the decomposition of $\sigma_{i,v}$ into irreducible representations. Let $r_{l}$ be the number of $\sigma_{i,v;j}$'s  such that $\dim \sigma_{i,v;j}=l,$ $1\leq l\leq 2.$ Then $r_1+2r_2=n.$ It the follows from \eqref{28} and \eqref{29} that
\begin{align*}
C_v(\chi_v)^n&\leq \prod_{\substack{1\leq j\leq r_v\\ \dim \sigma_{i,v;j}=1}}\big[3\cdot C_v(\sigma_{1,v;j})C_v(\sigma_{2,v;j})\big]\cdot \prod_{\substack{1\leq j\leq r_v\\ \dim \sigma_{i,v;j}=2}}\big[4\cdot C_v(\sigma_{1,v;j})C_v(\sigma_{2,v;j})\big]\\
&\leq 3^{r_1}4^{r_2}\cdot C_v(\sigma_{1,v})C_v(\sigma_{2,v})\leq 3^{n}\cdot C_v(\sigma_{1,v})C_v(\sigma_{2,v}).
\end{align*}
Thus \eqref{25} follows. Moreover, from the above proof, it is clear that the equality in \eqref{25} holds if $r_1=n$ (so $r_2=0$) and $\nu_{1,j}=-1,$ $\nu_{2,j}=0,$ for all $1\leq j\leq r_v.$
\end{proof}

\begin{lemma}\label{26}
Assume $F_v\simeq \mathbb{C}.$ Then
\begin{equation}\label{20}
C_v(\chi_v)\leq 9\cdot [C_v(\sigma_{1,v})C_v(\sigma_{2,v})]^{1/n}.
\end{equation}
Moreover, the above bound is sharp.
\end{lemma}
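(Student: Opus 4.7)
The plan is to mirror the proof of Lemma~\ref{24}, now exploiting that when $F_v\simeq\mathbb{C}$ every irreducible representation of $\mathcal{W}_{F,v}\simeq\mathbb{C}^{\times}$ is one-dimensional, so $r_v=n$ and each $\sigma_{i,v;j}=\tau_{k_{i,j},\nu_{i,j}}$. Writing $\chi_v=\tau_{k',\nu'}$, the relation $\sigma_{1,v;j}\otimes\chi_v\simeq\sigma_{2,v;j}$ forces $k'=k_{2,j}-k_{1,j}$ and $\nu'=\nu_{2,j}-\nu_{1,j}$, independent of $j$. Set $x_j:=|\nu_{1,j}+|k_{1,j}|/2|$ and $y_j:=|\nu_{2,j}+|k_{2,j}|/2|$; by \eqref{16} and \eqref{21},
\[
C_v(\sigma_{1,v;j})=(1+x_j)^2,\qquad C_v(\sigma_{2,v;j})=(1+y_j)^2,\qquad C_v(\chi_v)=(1+|\nu'+|k'|/2|)^2.
\]
The heart of the matter is the factor-wise inequality
\[
C_v(\chi_v)\;\leq\; 9\,C_v(\sigma_{1,v;j})\,C_v(\sigma_{2,v;j})\qquad(1\leq j\leq n),
\]
which, upon taking the product over $j$ and an $n$-th root, yields \eqref{20}.

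I would prove the factor-wise bound by splitting on the signs of the integers $k_{1,j}$ and $k_{2,j}$. When they agree (or one vanishes), $|k_{2,j}-k_{1,j}|=\bigl||k_{2,j}|-|k_{1,j}|\bigr|$, so $\nu'+|k'|/2=\pm\bigl[(\nu_{2,j}+|k_{2,j}|/2)-(\nu_{1,j}+|k_{1,j}|/2)\bigr]$ and the triangle inequality combined with $1+x_j+y_j\leq(1+x_j)(1+y_j)$ gives the stronger $C_v(\chi_v)\leq C_v(\sigma_{1,v;j})C_v(\sigma_{2,v;j})$. When the signs are opposite, $|k_{2,j}-k_{1,j}|/2=|k_{1,j}|/2+|k_{2,j}|/2$, and
\[
\nu'+|k'|/2\;=\;\bigl(\nu_{2,j}+|k_{2,j}|/2\bigr)-\bigl(\nu_{1,j}+|k_{1,j}|/2\bigr)+|k_{1,j}|.
\]
Using $x_j\geq\Re(\nu_{1,j}+|k_{1,j}|/2)\geq -1/2+|k_{1,j}|/2$, where the second inequality is the Jacquet-Shalika bound $|\Re(\nu_{1,j})|\leq 1/2$ invoked in the proof of Lemma~\ref{24}, one obtains $|k_{1,j}|\leq 2x_j+1$ and hence $|\nu'+|k'|/2|\leq 3x_j+y_j+1$, so $C_v(\chi_v)\leq(2+3x_j+y_j)^2$. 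The elementary identity
\[
3(1+x_j)(1+y_j)-(2+3x_j+y_j)\;=\;1+2y_j+3x_jy_j\;\geq\;0
\]
closes the case with the desired constant $9$.

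For sharpness, following the admissible-but-not-Jacquet-Shalika example used in Lemma~\ref{24}, take the opposite-sign regime with $\nu_{1,j}=-K$, $k_{1,j}=2K$, $\nu_{2,j}=0$, $k_{2,j}=-2K$ for a positive integer $K$: then $x_j=0$, $y_j=K$, $|\nu'+|k'|/2|=3K$, and the ratio $C_v(\chi_v)/[C_v(\sigma_{1,v;j})C_v(\sigma_{2,v;j})]=(1+3K)^2/(1+K)^2\to 9$ as $K\to\infty$, proving that $9$ cannot be improved. The main obstacle is the opposite-sign case: without the Jacquet-Shalika control on $\Re(\nu_{1,j})$ the ratio is unbounded in $|k_{1,j}|$ (take e.g.\ $\nu_{1,j}=-|k_{1,j}|/2+\epsilon$ and $\nu_{2,j}=-|k_{2,j}|/2$, giving $x_j,y_j\to 0$ while $|k_{1,j}|$ is free), so the linear bound $|k_{1,j}|\leq 2x_j+1$ that unitarity provides is precisely what makes the constant $9$ possible.
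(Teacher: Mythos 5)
Your overall skeleton agrees with the paper's: since $F_v\simeq\mathbb{C}$ every $\sigma_{i,v;j}$ is a character $\tau_{k_{i,j},\nu_{i,j}}$, one proves the factor-wise bound $C_v(\chi_v)\leq 9\,C_v(\sigma_{1,v;j})C_v(\sigma_{2,v;j})$ and then takes the product over $j$ and an $n$-th root; and your opposite-sign case is correct and is in spirit the paper's Claim \ref{31}, with the Jacquet--Shalika bound entering through $|k_{1,j}|\leq 2x_j+1$. The gap is in your first case. The identity $\nu'+|k'|/2=\pm\bigl[(\nu_{2,j}+|k_{2,j}|/2)-(\nu_{1,j}+|k_{1,j}|/2)\bigr]$ holds only when $|k_{2,j}|\geq|k_{1,j}|$; if the signs agree (or one vanishes) but $|k_{1,j}|>|k_{2,j}|$, the correct identity is $\nu'+|k'|/2=(\nu_{2,j}-|k_{2,j}|/2)-(\nu_{1,j}-|k_{1,j}|/2)$, which differs from your expression by $2(\nu_{2,j}-\nu_{1,j})$. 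As a result the ``stronger'' conclusion $C_v(\chi_v)\leq C_v(\sigma_{1,v;j})C_v(\sigma_{2,v;j})$ you draw there is false: take $\sigma_{1,v;j}=\tau_{1,-1/2}$ and $\sigma_{2,v;j}=\tau_{0,0}$, so $\chi_v=\tau_{-1,1/2}$; then $C_v(\sigma_{1,v;j})=C_v(\sigma_{2,v;j})=1$ while $C_v(\chi_v)=(1+|1/2+1/2|)^2=4$. The constant $9$ does still hold in this sub-case, but your case-1 argument does not establish it; the repair is to treat $|k_{1,j}|>|k_{2,j}|$ exactly as your case 2, writing $\nu'+|k'|/2=(\nu_{2,j}+|k_{2,j}|/2)-(\nu_{1,j}+|k_{1,j}|/2)+(|k_{1,j}|-|k_{2,j}|)$ and bounding $|k_{1,j}|-|k_{2,j}|\leq|k_{1,j}|\leq 2x_j+1$. (The paper sidesteps the sign analysis altogether: it applies the triangle inequality as in \eqref{30} and then the single estimate of Claim \ref{31}, $|\nu_{i,j}|+|k_{i,j}|/2\leq 1+3|\nu_{i,j}+|k_{i,j}|/2|$, for $i=1,2$.)

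On sharpness: your family $\nu_{1,j}=-K$, $k_{1,j}=2K$ violates the bound $|\Re(\nu_{1,j})|\leq 1/2$ that both your argument and the paper's rely on, so it does not exhibit sharpness within the class the lemma is about (archimedean parameters of unitary cuspidal representations); in fact, keeping the Jacquet--Shalika constraint, your own opposite-sign computation can be tightened (via $|\nu_{1,j}-|k_{1,j}|/2|\leq x_j+1$) to a per-factor constant $4$ in that case, so the limit $9$ is not approached along admissible parameters of this shape. This caveat is shared by the paper, whose proof of Lemma \ref{26} does not address sharpness and whose sharpness example for Lemma \ref{24} has the same defect, but you should not present the non-admissible family as a proof of the sharpness assertion.
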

\begin{proof}
Since $F_v\simeq \mathbb{C},$ we can write $\sigma_{i,v;j}=\tau_{k_{i,j},\nu_{i,j}},$ $k_{i,j}\in \mathbb{Z}$ and $\nu_{i,j}\in\mathbb{C},$ $1\leq j\leq r.$ Write $\chi_v$ as $\tau_{k',\nu'}$ for some $k'\in \mathbb{Z}$ and $\nu'\in\mathbb{C}.$ Then \eqref{19} yields $k_{1,j}+k'=k_{2,j}$ and $\nu_{1,j}+\nu'=\nu_{2,j}.$ So by \eqref{21},  $C_v(\chi_v)=(1+|\nu_{2,j}-\nu_{1,j}+|k_{2,j}-k_{1,j}|/2|)^2.$
By triangle inequality we have
\begin{equation}\label{30}
1+|\nu_{2,j}-\nu_{1,j}+|k_{2,j}-k_{1,j}|/2|\leq 1+|\nu_{2,j}|+|\nu_{1,j}|+|k_{2,j}|/2+|k_{1,j}|/2.
\end{equation}
On the other hand, we have the following inequality:
\begin{claim}\label{31}
Let $1\leq i\leq 2.$ Then $|\nu_{i,j}|+|k_{i,j}|/2\leq 1+3|\nu_{i,j}+|k_{i,j}|/2|.$
\end{claim}

Therefore, by Claim \ref{31}, one obtains the upper bound for $C_v(\chi_v):$
\begin{align*}
C_v(\chi_v)\leq (3+3|\nu_{i,j}+|k_{i,j}|/2|+3|\nu_{i,j}+|k_{i,j}|/2|)^2\leq 9\cdot \prod_{i=1}^2(1+|\nu_{i,j}+|k_{i,j}|/2|)^2.
\end{align*}
Hence, by \eqref{16} we have  $C_v(\chi_v)\leq 9\cdot C_v(\sigma_{1,v;j})C(\sigma_{2,v;j}).$ Invoking with the decomposition $\sigma_{i,v}=\oplus_{j=1}^{r_v}\sigma_{i,v;j}$ we thus get
\begin{align*}
C_v(\chi_v)^n&\leq \prod_{\substack{1\leq j\leq r_v}}\big[9\cdot C_v(\sigma_{1,v;j})C_v(\sigma_{2,v;j})\big]=9^n\cdot C_v(\sigma_{1,v})C_v(\sigma_{2,v}).
\end{align*}
Therefore, \eqref{20} follows.
\end{proof}
\begin{proof}[Proof of Claim \ref{31}]
\begin{itemize}
	\item[(a).] Suppose $k_{i,j}=0.$ Then $|\nu_{i,j}|+|k_{i,j}|/2=|\nu_{i,j}|.$ Hence $|\nu_{i,j}|+|k_{i,j}|/2\leq 1+3|\nu_{i,j}+|k_{i,j}|/2|.$ holds trivially.
\medskip
	
	\item[(b).] Suppose $|k_{i,j}|=1.$ Then by triangle inequality, $|\nu_{i,j}|+|k_{i,j}|/2=|\nu_{i,j}|+1/2\leq |\nu_{i,j}+1/2|+1 \leq 1+3|\nu_{i,j}+|k_{i,j}|/2|.$
	\medskip
	
	\item[(c).] Suppose $|k_{i,j}|\geq 2.$ Note that by Corollary 2.5 of \cite{JS81}, $|\Re(\nu_{i,j})|\leq 1/2.$ Then $|\Re(\nu_{i,j})|\leq |\Re(\nu_{i,j})+|k_{i,j}|/2|,$ as $|k_{i,j}|/2\geq 1.$ Also, $2|\Re(\nu_{i,j})+|k_{i,j}|/2|\geq |k_{i,j}|/2.$ So $|\nu_{i,j}|+|k_{i,j}|/2\leq |\nu_{i,j}+|k_{i,j}|/2|+|k_{i,j}|/2\leq |\nu_{i,j}+|k_{i,j}|/2|+2|\Re(\nu_{i,j})+|k_{i,j}|/2|\leq 1+3|\nu_{i,j}+|k_{i,j}|/2|.$
\end{itemize}
In all, Claim \ref{31} holds.
\end{proof}
\begin{remark}
Our proofs of Lemma \ref{24} and Lemma \ref{26} would imply an explicit version of Lemma A.2 in \cite{HB18} (in the case when $n=n'$). Also, the original proof of Lemma A.2 there is not quite complete as the inequality chain right above (A. 13) (see P. 14 of \cite{HB18}) is not correct for $k=1.$
\end{remark}

\medskip

\begin{thmx}\label{B}
Let $\pi_1$ and $\pi_2$ be unitary cuspidal representations on $GL(n,\mathbb{A}_F)$ such that  $\pi_1\otimes\chi\simeq\pi_2,$ for a character $\chi.$ Let $C_{1}$ (resp. $C_{2}$) be the analytic conductor of $\pi_1$ (resp. $\pi_2$). Let $\chi$ be a Hecke character on $F$. Denote by $C$ the analytic conductor of $\chi.$ Then
\begin{equation}\label{27}
C\leq 3^{[F: \mathbb{Q}]}\cdot (C_1C_2)^{1/n},
\end{equation}
where $[F:\mathbb{Q}]$ is the degree of $F/\mathbb{Q}.$
\end{thmx}
\begin{proof}
Let $r_1$ (resp. $r_2$) be the number of real (complex) places of $F.$ Then $r_1+2r_2=[F: \mathbb{Q}].$ By Lemma \ref{24} and Lemma \ref{26}, one has
\begin{align*}
\prod_{v\in\Sigma_{\infty}}C_v(\chi_v)&\leq \prod_{\substack{v\in\Sigma_{\infty}\\ F_v\simeq \mathbb{R}}}3\cdot [C_v(\sigma_{1,v})C_v(\sigma_{2,v})]^{1/n}\cdot \prod_{\substack{v\in\Sigma_{\infty}\\ F_v\simeq \mathbb{C}}}9\cdot [C_v(\sigma_{1,v})C_v(\sigma_{2,v})]^{1/n}\\
&=3^{r_1+2r_2}\Bigg[\prod_{v\in\Sigma_{\infty}}C_v(\sigma_{1,v})C_v(\sigma_{2,v})\Bigg]^{1/n}.
\end{align*}
Then \eqref{27} follows from Theorem \ref{A}.
\end{proof}

\bibliographystyle{alpha}

\bibliography{Conductor}

\end{document}